\documentclass[final,3p,12pt]{elsarticle}

\usepackage{graphicx}
\usepackage{amssymb,stmaryrd}
\usepackage{amsthm}
\usepackage{amsmath}
\usepackage{color}
\usepackage{comment}
\usepackage{calligra}
\DeclareMathAlphabet{\mathcalligra}{T1}{calligra}{m}{n}
\DeclareFontShape{T1}{calligra}{m}{n}{<->s*[2.2]callig15}{}

\journal{Discrete Mathematics}

\newcommand{\ml}{l\kern-0.55mm\char39\kern-0.3mm}

\newtheorem{theorem}{Theorem}
\newtheorem{corollary}{Corollary}
\newtheorem{conjecture}{Conjecture}
\newtheorem{problem}{Problem}

\begin{document}

\begin{frontmatter}


\title{Betweenness centers of graphs}



\author[TM]{Tomáš Madaras\corref{TMc}}
\ead{tomas.madaras@upjs.sk}
\cortext[TMc]{Corresponding author}
\author[MP]{Matúš Paralič}
\ead{matus.paralic@student.upjs.sk}

\address[TM,MP]{P.J. Šafárik University in Košice, Slovakia}

\begin{abstract}
The betweenness centrality of a vertex $v$ in a graph $G = (V,E)$ is the sum of the relative numbers of shortest paths of $G$ that pass through $v$. The vertices of $G$ which have the maximum (resp. minimum) betweenness induce the betweenness center (resp. betweenness periphery) of $G$. We study betweenness of graphs and their localization in graph blocks, presenting sufficient conditions for graphs (in terms of diameter or block sizes) to have those centers contained in a single block. Further, we show that each graph occurs as the subgraph induced by the betweenness center of some graph (as well as the subgraph induced by the betweenness periphery). For trees, we show, by an alternative proof, that their betweenness center is always contained in a path; in addition, we enumerate trees of order at most 20 according to the order of their betweenness centers.
\end{abstract}

\begin{keyword}
Centrality index \sep Graph center \sep Graph periphery \sep Betweenness 

\MSC[2020] 05C09, 05C12
\end{keyword}

\end{frontmatter}


\section{Introduction}
\label{S:1}

Throughout this paper, we consider (unless stated otherwise) connected graphs without loops or multiple edges; the terminology is taken from \cite{West}.

In research on social or complex networks, a lot of attention is paid to the identification of network actors that play an important (or central) role within a network; typical examples include popular and influential persons in communities, super-spreaders of infectious diseases, or critical facilities of a country's power network. The quantitative measure of actor importance is expressed in terms of so-called centrality indices. Formally, a {\em vertex centrality} is a function $c:\ V(G) \to \mathbb{R}_0^+$ which is invariant under graph isomorphism and reflects the subjective perception of central vertices in a graph (in the sense that the more the vertices are perceived as central -- like the center of a star graph compared to its pendant vertices -- the higher values of $c$ they have). A function defined on $V(G)$ (not necessarily positive) which is just invariant under graph isomorphism is called a {\em structural index}. An example of a structural index which is not a centrality is the vertex eccentricity ${\rm ecc}(v) = \max\limits_{x \in V(G)} d(v,x)$ (however, one can take its reciprocal to conform with the concept); among classical centralities, one finds vertex degree, the closeness centrality ${\rm cl}_G(v) = \dfrac{1}{\sum\limits_{x \in V(G)} d(v,x)}$, and the (unnormalized) {\em betweenness centrality} ${\rm b}_G(v) = \displaystyle\sum\limits_{\{x,y\}\in \binom{V(G)\setminus{v}}{2}} \dfrac{\sigma_{x,y}(v)}{\sigma_{x,y}}$ (the subscript is omitted if $G$ is known from the context), where $\sigma_{x,y}$ is the total number of shortest $x-y$-paths in $G$ and $\sigma_{x,y}(v)$ is the number of those shortest $x-y$-paths having $v$ as an internal vertex. The betweenness centrality is frequently used in the analysis of social and complex networks (see, for example, \cite{SaxenaIyengar}, Section 5.5, or \cite{GirvanNewman,Pacheco,ZaoliEtAl,ZhaoEtAl}), but, within the last two decades, there has also been an increase in studies of its graph-theoretical properties, as in \cite{GagoHurajovaMadaras,GagoHurajovaMadaras2,GagoHurajovaMadaras3,HartmanPokornaValtr,GhanbariEtAl}.

Every centrality index $c$ induces, in a graph $G = (V,E)$, two important induced subgraphs: the $c$-center $\mathcal{C}_c(G)$ induced by the set $C_c(G) = \{v \in V(G):\ c(v) = \max\limits_{x \in V(G)} c(x)\}$, and the $c$-periphery induced by the set $P_c(G) = \{v \in V(G):\ c(v) = \min\limits_{x \in V(G)} c(x)\}$; if $c$-center of $G$ coincides with $G$, then $G$ is called $c$-uniform.  When studying these subgraphs, one may (typically) discuss the following statements:
\begin{itemize}
    \item[(S1)] The $c$-center of $G$ lies in a single block.
    \item[(S2)] Every $c$-uniform graph is 2-connected.
    \item[(S3)] Given a graph $H$, there exists a graph $G_1$ (resp. $G_2$) whose $c$-center (resp. $c$-periphery) is isomorphic to $H$. 
\end{itemize}

Obviously, (S1) implies (S2), and (S1) holds for several important centrality indices, namely, for the reciprocal of eccentricity (\cite{HararyNorman}) as well as for closeness centrality (\cite{Truszczynski}), and for certain less known indices, like the reciprocal of detour eccentricity (\cite{ChartrandEtAl}). Both (S1) and (S2) fail for vertex degree (although a slightly weaker analogue of (S2) holds for even-degree regular graphs: they are bridgeless, that is, edge-2-connected). For betweenness, (S2) was proved in \cite{GagoHurajovaMadaras}, but, in general, (S1) does not hold: in Section 2, we present an infinite family of counterexamples. Nevertheless, based on the specific structure of these counterexamples, we prove, in Section 2, several sufficient conditions (in terms of diameter, number of edges or block orders) which guarantee the validity of (S1); some of these results are sharp.

Our interest in (S3) is motivated by its validity for the classical eccentricity-based center (\cite{Hedetniemi}), for the median center (\cite{Slater}), for the detour center (\cite{ChartrandEtAl}) and for the periphery (\cite{BielakSyslo}; here (S3) does not hold in general, but the complete characterization is known). In Section 3, we provide graph constructions showing the full validity of (S3) both for the betweenness center and periphery. In Section 4, we give an alternative proof of the fact (previously known, but not in the context of betweenness) that, in trees, the betweenness center lies within a path, and present the census of trees up to 20 vertices with respect to the orders of their betweenness centers.

\section{Localization of betweenness centers in blocks}

We first present two examples of small graphs, and an infinite family of graphs such that their betweenness center is not contained in a single block (thus, is {\em delocalized}).

\bigskip
\noindent
\textbf{Example 1:} Let $T$ be the complete binary tree of height 2 (that is, the 7-vertex tree with two 3-valent vertices $u,w$ , central 2-valent vertex $v$ and four pendant vertices). The direct calculation yields ${\rm b}(v) = 3\cdot 3 = 9$, ${\rm b}(u) = {\rm b}(w) = 2\cdot 4 + 1 = 9$; hence $u,v,w$ induce the betweenness center of $T$ which lies in two blocks of $T$. An exhaustive computer search over all non-isomorphic connected graphs of order at most six shows that each of them has its betweenness center localized within a single block. 

\medskip
\noindent \textbf{Example 2:} Let $F_1,F_2$ be two copies of a fan $F_{1,9} \cong P_9 \vee K_1$ (that is, the join of a vertex $y$ and a 9-vertex path). Let $x_i \in V(F_i)$ be a 2-valent vertex in such a copy, $y_i$ be its central vertex of degree 9, and let $F$ be a graph obtained from $F_1,F_2$ by identifying $x_1,x_2$ into a single common 4-valent vertex $x$.

\bigskip
\begin{figure}[h!]
    \centering
    \includegraphics[width=0.7\linewidth]{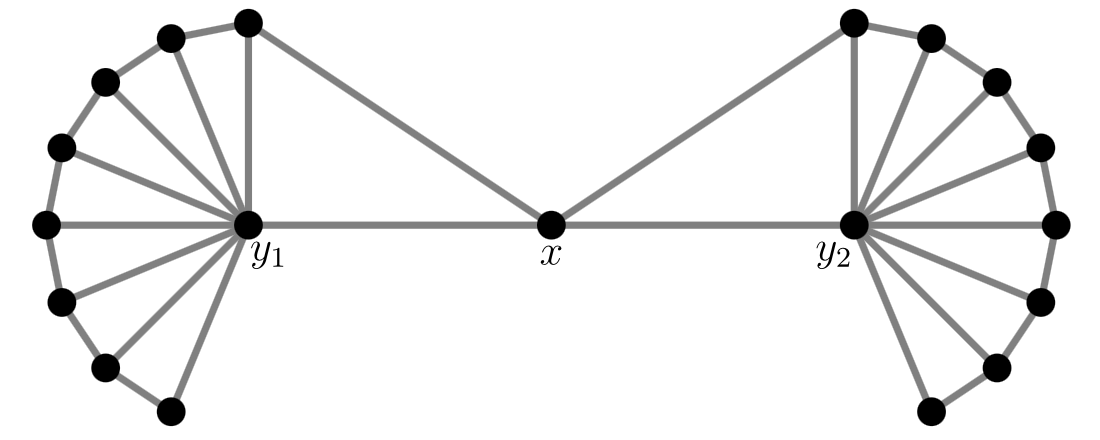}
    \caption{The graph $F$}
    \label{fig:enter-label}
\end{figure}


\medskip
Because both neighbors of $x$ in the subgraphs of $F$ which are isomorphic to $F_1,F_2$ are adjacent, a positive contribution to the value of ${\rm b}(x)$ comes only from those pairs $\{a,b\}$ of vertices of $F$ such that $a \in V(F_1)$, $b\in V(F_2)$ and $a,b \neq x$; hence ${\rm b}(x) = 9\cdot 9 = 81$. On the other hand, the value ${\rm b}(y_1)$ equals the sum of ${\rm b}_{F_1}(y_1)$ and the contributions from the pairs $(a,b)$ such that $b \in V(F_2)\setminus \{x\}$. By  \cite{KlisaraHurajovaMadarasSkrekovski} (Proposition 3.2 on the maximum of betweenness centrality in 2-connected graphs),  ${\rm b}_{F_1}(y_1) = \frac{(10-3)^2}{2}$; the remaining contributions to ${\rm b}(y_1)$ give $(10-4)\cdot(10-1) + \frac{1}{2}\cdot 1 \cdot 9$ in total. Thus ${\rm b}(y_1) = 83 = {\rm b}(y_2)$, and so the betweenness center of $F$ is delocalized.  

\medskip
\noindent \textbf{Example 3:}
Let $k \geq 2$ be an integer, let $P_{u,v}$ be a path of length $k$ with endvertices $u$ and $v$. Let $T_k$ be a tree obtained from $P_{u,v}$ by attaching \(k+1\) new pendant vertices to both $u$ and $v$. Let $w$ be a vertex on $P_{uv}$ such that $d(u,w) = \ell, d(v,w) = k-\ell$ (by symmetry, one can assume that $0 \leq \ell \leq \left \lfloor \frac{k}{2} \right \rfloor$). If $w \neq u,v$, then  $T_k - w$ consists of two trees on $k+1 + \ell$ and $k+1 + k -\ell$ vertices; by \cite{GagoHurajovaMadaras} (Theorem 3), we have ${\rm b}(w) = (k+1+\ell)(k+1+k-\ell)$. Thus, with $k$ being fixed, ${\rm b}(w)$ is a quadratic function of $\ell$ which is maximized for $\ell = \left \lfloor \frac{k}{2} \right \rfloor$; the maximum then equals $(k+1 + \left \lfloor \frac{k}{2} \right \rfloor)(k + 1 + \left \lceil \frac{k}{2} \right \rceil)$. Now, let $w = u$. Then ${\rm b}(w) = {{k+1}\choose 2} + (k+1)\cdot 1 \cdot (k+1+k) = \frac{5k^2+7k+2}{2}$. If $k$ is even, we have $(k+1 + \left \lfloor \frac{k}{2} \right \rfloor)(k + 1 + \left \lceil \frac{k}{2} \right \rceil) = \left ( \frac{3}{2}k +1\right )^2 < \frac{5k^2+7k+2}{2}$ for $k \geq 2$; otherwise $(k+1 + \left \lfloor \frac{k}{2} \right \rfloor)(k + 1 + \left \lceil \frac{k}{2} \right \rceil) = \frac{3(3k^2+4k+1)}{4}< \frac{5k^2+7k+2}{2}$. Thus, in $T_k$, the maximum betweenness is attained for $u,v$ (note that their distance is $k$).

\medskip
Observe that the graphs from Examples 1 and 2 have diameter 4; this value is sharp, due to the following

\begin{theorem} 
\label{betweenness_diam_3}
Let $G$ be a graph of diameter at most 3. Then its betweenness center lies in a single block.
\end{theorem}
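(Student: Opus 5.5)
The plan is to first pin down the block structure forced by $\operatorname{diam}(G)\le 3$, and then compare betweenness values across blocks. If $G$ is 2-connected there is nothing to prove, so assume $G$ has cut vertices.

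\textbf{Step 1: all cut vertices lie in one block.} Let $c_1,c_2$ be two cut vertices that do not lie in a common block. Then the $c_1$--$c_2$ path in the block-cutvertex tree passes through a third cut vertex $c_3$. Choose a vertex $x$ in a component of $G-c_1$ not containing $c_2$, and a vertex $y$ in a component of $G-c_2$ not containing $c_1$. Every $x$--$y$ path passes through $c_1$, $c_3$ and $c_2$ in this order, so
\[
d(x,y)\ge d(x,c_1)+d(c_1,c_3)+d(c_3,c_2)+d(c_2,y)\ge 4,
\]
which is a contradiction. Hence any two cut vertices share a block. Now take three cut vertices, pairwise sharing blocks $B_{12},B_{13},B_{23}$. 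If these blocks are not all equal, the block-cutvertex tree contains a cycle. So all cut vertices lie in a single block $B_0$.

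Consequently every block $B\neq B_0$ contains exactly one cut vertex $c$ (it contains at least one, since $G$ is connected and not 2-connected). All of $V(G)\setminus V(B)$ hangs off $B$ at $c$, and $V(G)\setminus V(B)\ne\emptyset$. For $y\in V(B)$, pick any $x\notin V(B)$; then $d(c,y)\le d(x,y)-1\le 2$.

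\textbf{Step 2: a non-cut vertex $v$ of such a block $B$ is strictly beaten by $c$.} It suffices to show $b(v)<b(c)$ for every $v\in V(B)\setminus\{c\}$, $B\ne B_0$. Then the center avoids all such vertices and is contained in $B_0$.

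Let $n=|V(G)|$ and $m=|V(B)|$. A shortest path with internal vertex $v$ has both endpoints in $B$, or one endpoint $x\notin V(B)$ and the other $y\in V(B)$. In the latter case the ratio $\sigma_{x,y}(v)/\sigma_{x,y}$ equals $\sigma_{c,y}(v)/\sigma_{c,y}$, since every shortest $x$--$y$ path is a shortest $x$--$c$ path followed by a shortest $c$--$y$ path. Because $d(c,y)\le 2$, this ratio is nonzero only when $d(c,y)=2$ and $v$ is a common neighbour of $c$ and $y$. This gives
\[
b(v)= b_B(v) + (n-m)\sum_{y\in N_2(c)\cap V(B)} \frac{[vc,vy\in E]}{|N(c)\cap N(y)\cap V(B)|},
\]
where $b_B(v)$ is the betweenness of $v$ computed inside $B$. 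On the other side, $c$ separates every pair $x\notin V(B)$, $y\in V(B)\setminus\{c\}$, so
\[
b(c)\ge (n-m)(m-1) + b_B(c).
\]
The second term of $b(v)$ is at most $(n-m)\,|N_2(c)\cap V(B)|\le (n-m)(m-2)$, because $v$ itself is not in $N_2(c)$. Hence this part of $b(v)$ is beaten by the first term of $b(c)$ with a surplus of at least $n-m$.

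\textbf{Main obstacle.} What remains is to show $b_B(v)<(n-m)+b_B(c)$, or more carefully to trade the loss in the middle sum against $b_B(v)$. I expect this to be the hard step. Pairs inside $B$ at distance $\ge 2$ through $v$ could a priori contribute a lot. However, $B$ has radius at most $2$ around $c$, and $v$ cannot be internal on any shortest path from $c$ except to vertices of $N_2(c)$.

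I would try first to refine the middle bound. Each $y\in N_2(c)$ that $v$ "serves" lies at distance $\le 2$ from $c$, so pairs $\{y,z\}\subseteq V(B)$ through $v$ have $d(y,z)\le 3$ and constrain the neighbourhoods. I would count pairs $\{y,z\}$ in $B$ through $v$ and charge each to a pair $\{x,y\}$ or $\{x,z\}$ with $x$ outside $B$ that goes through $c$ but not through $v$; such an $x$ exists since $n-m\ge 1$. If this charging does not close cleanly, I would fall back to a case split on whether $N(v)\subseteq N[c]$ or not.
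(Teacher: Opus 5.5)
Your Step 1 and the bookkeeping at the start of Step 2 are correct. The inequality you set out to prove in Step 2, however, is false in the generality you state it, so no charging argument can get past your ``main obstacle''. When $G$ has only one cut vertex $c$, every block contains $c$, so Step 1 does not determine $B_0$. A block $B\neq B_0$ with vertices at distance $2$ from $c$ may then contain a vertex of larger betweenness than $c$.

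For example, take the fan $P_k\vee K_1$ with apex $y$ and path $p_1\dots p_k$, and attach a pendant vertex $w$ to $p_1$. This graph has diameter $3$ and unique cut vertex $p_1$. Here ${\rm b}(p_1)=k$, but ${\rm b}(y)=\frac{(k-2)^2}{2}+k-\frac52>k$ for $k\geq 5$. With $B_0=\{p_1,w\}$ and $B$ the fan, your target ${\rm b}(y)<{\rm b}(p_1)$ fails. Even your reduced inequality ${\rm b}_B(v)<(n-m)+{\rm b}_B(c)$ reads $\frac{(k-2)^2}{2}<1$ here. Since $n-m$ can be $1$ while ${\rm b}_B(v)$ grows quadratically in $m$, no refinement of the middle sum can rescue it.

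The missing idea is a second use of the diameter bound, which decides which block must play the role of $B_0$. Suppose a block $B$ with unique cut vertex $c$ contains $y$ with $d(c,y)=2$. Then for every $x\notin V(B)$ we have $3\geq d(y,x)=2+d(c,x)$, so $c$ is adjacent to every vertex outside $B$. Deleting any vertex other than $c$ therefore leaves $G$ connected, so $c$ is the only cut vertex, and no second block can contain a vertex at distance $2$ from $c$. In that situation you must take $B_0:=B$; otherwise keep your $B_0$.

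With this choice, every block $B\neq B_0$ satisfies $V(B)\subseteq N[c]$. Your middle sum is then empty, and ${\rm b}_B(v)\leq{\rm b}_B(c)$. The reason is that two nonadjacent vertices of $B$ are at distance $2$ with $c$ as a common neighbour, so such a pair contributes to ${\rm b}(v)$ at most what it contributes to ${\rm b}(c)$. Hence ${\rm b}(v)\leq{\rm b}_B(c)<{\rm b}_B(c)+(n-m)(m-1)\leq{\rm b}(c)$.

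This is exactly the paper's case distinction: either some endblock contains a vertex nonadjacent to its cut vertex, or none does. Without it your argument does not close.
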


\noindent \textbf{Proof:} The statement is obvious for 2-connected graphs. First, assume that $\operatorname{diam}(G)=2$ and $G$ is not 2-connected. Then $G$ has a unique cut-vertex $a$ which is adjacent to every other vertex of $G$. For a vertex $x \neq a$ in a block $B$, the contribution of any pair of nonadjacent vertices $u,v \in V(B)$ to ${\rm b}(x)$ is not greater than its contribution to ${\rm b}(a)$, since whenever $u-x-v$ is a shortest $u$-$v$ path, so is $u-a-v$. On the other hand, pairs of vertices lying in distinct components of $G-a$ contribute to ${\rm b}(a)$ but not to ${\rm b}(x)$. Therefore ${\rm b}(a)>{\rm b}(x)$.

Now, assume that $\operatorname{diam}(G)=3$ and $G$ is not 2-connected. Then all cut-vertices of $G$ induce a clique (otherwise $G$ contains two nonadjacent cut-vertices $x,y$ and a path of length at least $1+d(x,y)+1\geq4$, a contradiction), and hence they belong to a single block $B$ of $G$.

If an endblock $B'$ with cut-vertex $a$ contains a vertex $x$ which is not adjacent to $a$, then every vertex outside $B'$ is adjacent to $a$; otherwise its distance from $x$ would be at least $4$. By the same comparison of contributions as above, ${\rm b}(y)<{\rm b}(a)$ for every $y\notin B'$, and hence the betweenness center is contained in $B'$.

Otherwise, in every endblock all vertices are adjacent to its cut-vertex. For a vertex $x\notin B$, let $a\in B$ be the cut-vertex of the endblock containing $x$. Again, the same argument as in the diameter-2 case gives ${\rm b}(x)<{\rm b}(a)$. Thus no vertex outside $B$ has maximum betweenness, and the betweenness center is contained in $B$.
\hfill $\square$
\bigskip

\begin{corollary}
Let $G$ be an $n$-vertex graph with at least $4+\frac{n(n-5)}{2}$ edges. Then its betweenness center is contained in a single block.
\end{corollary}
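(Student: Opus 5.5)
The plan is to reduce the corollary to Theorem~\ref{betweenness_diam_3}: I will show that an $n$-vertex graph with at least $4+\frac{n(n-5)}{2}$ edges has diameter at most 3. Note that
\[
\binom{n}{2}-\Bigl(4+\frac{n(n-5)}{2}\Bigr)=\frac{n(n-1)-n(n-5)}{2}-4=2n-4 .
\]
So the complement $\overline{G}$ has at most $2n-4$ edges. The statement is implicitly about connected graphs, as agreed throughout the paper.

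Suppose, for contradiction, that $\operatorname{diam}(G)\ge 4$. Pick vertices $u,v$ with $d(u,v)=4$ (any longer geodesic contains such a pair), and let $u=p_0,p_1,p_2,p_3,p_4=v$ be a shortest path. I will count the missing edges. Every vertex $w\notin\{u,v\}$ cannot be adjacent to both $u$ and $v$, since otherwise $d(u,v)\le 2$. This gives at least $n-2$ non-edges incident with $u$ or $v$, apart from the pair $uv$ itself.

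Next I add further non-edges that involve the vertices $p_1,p_3$ or the pair $uv$:
\begin{itemize}
\item The pair $uv$ itself is a non-edge.
\item $p_1$ is nonadjacent to $v$ and to $p_3$.
\item $p_3$ is nonadjacent to $u$.
\end{itemize}
Beyond this, for each $w\notin\{p_0,\dots,p_4\}$, the vertex $w$ cannot be adjacent to both $p_1$ and $p_4$, nor to both $p_0$ and $p_3$, since either would give a $u$--$v$ path of length at most 3. Similarly, a vertex adjacent to $u$ cannot be adjacent to $p_3$ or $v$, and so on.

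The cleanest count I would try is the following. Let $A=N(u)$ and $C=N(v)$. A vertex at distance 1 from $u$ misses $v$, and it also misses every vertex of $C$. The latter is the key extra saving: if $a\in A$ and $c\in C$ were adjacent, then $d(u,v)\le 3$. Partition the vertices by their distance from $u$ into layers $L_0,\dots,L_k$ with $k\ge 4$. Each $L_i$ with $i\le 4$ is nonempty, and there are no edges between layers $L_i$ and $L_j$ with $|i-j|\ge2$. The number of non-edges is then at least $\sum_{|i-j|\ge 2}|L_i||L_j|$.

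The main step, and the one I expect to be the obstacle, is minimizing this sum subject to $\sum|L_i|=n$ with at least five nonempty layers. Merging the layers beyond $L_4$ into $L_4$ only decreases the count, so I may assume exactly five layers $L_0,\dots,L_4$. Putting $|L_0|=|L_1|=|L_3|=|L_4|=1$ and $|L_2|=n-4$ gives $(n-4)\cdot 2+3=2n-5$ non-edges. I would then check that this is the minimum; for example, enlarging an outer layer only increases the count, because $L_0$ misses everything beyond $L_1$.

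If the minimum is $2n-5$, then a connected graph of diameter at least 4 can have up to $\binom{n}{2}-(2n-5)=4+\frac{n(n-5)}{2}+1$ edges. In that case the edge bound alone does not force diameter at most 3, and the argument would instead exploit the observation that this extremal configuration is essentially the tree-like structure of Example~1, handled directly. So I would first recheck the arithmetic $\binom n2-2n+4$ against $4+\frac{n(n-5)}2$. If the threshold in the corollary equals $\binom n2-(2n-5)$ edges, then diameter at least 4 forces at least $2n-5$ non-edges, so the hypothesis gives diameter at most 3 directly. Theorem~\ref{betweenness_diam_3} then finishes the proof.

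If instead the threshold is off by one, I would handle the extremal graphs by a separate argument: there are $n-4$ vertices in $L_2$, all adjacent to $p_1$ and $p_3$. In this structure $p_1$ and $p_3$ would be cut-vertices with the maximum betweenness, and I would still have to check whether they lie in a single block.
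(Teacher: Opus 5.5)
There is a genuine gap, and it starts with an arithmetic slip. For the profile $(1,1,n-4,1,1)$, your lower bound $\sum_{|i-j|\ge2}|L_i||L_j|$ gets $2(n-4)$ from the pairs $(L_0,L_2),(L_2,L_4)$. It also gets $1$ from each of the \emph{four} pairs $(L_0,L_3),(L_0,L_4),(L_1,L_3),(L_1,L_4)$. That totals $2n-4$, not $2n-5$. In general, for a five-layer profile $(1,a,b,c,1)$ the bound is $2n-4+(a-1)(c-1)$. A last layer of size at least $2$, or additional layers, make it strictly larger. So the minimum is exactly $2n-4$, which is Ore's bound used in the paper. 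Consequently the corollary's threshold, $\binom n2-(2n-4)$ edges, is \emph{equal} to the maximum size of an $n$-vertex graph of diameter $4$. The hypothesis therefore does not force $\operatorname{diam}(G)\le 3$, and Theorem~\ref{betweenness_diam_3} only handles graphs with strictly more edges. Your fallback reasoning has a related logical slip: ``diameter at least $4$ forces at least $2n-5$ non-edges'' cannot exclude a graph with exactly $2n-5$ non-edges, since a strict inequality would be needed. Also, Example~1 is irrelevant here, as it lies far below the edge threshold.

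The missing part, which your proposal leaves open, is the equality case: $G$ has exactly $2n-4$ non-edges and diameter $4$. Equality in your count forces exactly five layers with profile $(1,a,b,1,1)$ up to reversal, where $a+b=n-3$ and $a,b\ge1$. It also forces every edge inside a layer and between consecutive layers to be present. This is the same family the paper extracts from Ore's characterization. You only looked at $a=1$, and did not finish even that case. The verification is short:
\begin{itemize}
\item Pendant vertices have betweenness $0$, so they are never central.
\item If $a,b\ge2$, every other vertex lies in the $2$-connected block induced by $L_0\cup L_1\cup L_2\cup L_3$.
\item If $a=1$ and $b\ge2$, the cut-vertices $p_1,p_3$ and all of $L_2$ lie in the block induced by $L_1\cup L_2\cup L_3$. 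This answers your open question.
\item If $b=1$, the vertex $y\in L_2$ has ${\rm b}(y)=2n-6$. This exceeds ${\rm b}(p_3)=n-2$ and the value $3/a$ at each vertex of $L_1$, so the center is $\{y\}$.
\end{itemize}
With the count corrected and this case added, your layering argument becomes a self-contained substitute for citing Ore. It yields both the extremal bound and the description of the equality case, and is a perfectly acceptable alternative to the paper's route.
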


\noindent \textbf{Proof:}
By \cite{Ore}, the maximum number of edges of an $n$-vertex graph of diameter $d$ is \(\displaystyle d+\frac{(n-d-1)(n-d+4)}{2}\) (note that $n\geq d+1$). For $d=4$, this yields that every $n$-vertex graph with more than \(\displaystyle 4+\frac{n(n-5)}{2}\) edges has diameter at most 3, and so its betweenness center lies in a single block. From \cite{Ore}, one also obtains a characterization of maximal $n$-vertex graphs of diameter 4 (with $4+\frac{n(n-5)}{2}$ edges): each such graph is obtained either from $K_{n-2}$ by deleting an edge $ab$ and adding a new path $avu$, or from the join of two complete graphs $K_\ell$ and $K_{n-3-\ell}$ in such a way that a new vertex $z$ is joined to all vertices of $K_{n-3-\ell}$, a new vertex $u$ is joined to all vertices of $K_\ell$, and a third vertex $v$ is joined to $u$. The latter graph has only two blocks, one of them being $K_2$; hence its betweenness center lies in a single block. For the former extremal graph, we have ${\rm b}(u)=0$ (since $u$ is a pendant vertex), ${\rm b}(v)=n-2$, and ${\rm b}(a)=2n-6$. Since $n\geq5$, we have $2n-6>n-2$; thus ${\rm b}(a)>{\rm b}(v)$, which yields that the betweenness center again lies in a single block.
\hfill $\square$

\medskip
To discuss the quality of the lower bound on the number of edges in the above result, we describe, for every \(n\geq9\), a construction of an $n$-vertex graph with \(\frac{n^2}{4} - O(n)\) edges whose betweenness center is delocalized.  Consider two copies $K,K'$ of the complete graph  $K_k$, take two vertices $u \in K,u' \in K'$ and join them to a new vertex $x$; in addition, join $u,u'$ with new pendant vertices $v,v'$, respectively. The obtained graph has $n = 2k+3$ vertices, and for any vertex $y \neq u,x,u'$, ${\rm b}(y) = 0$ holds (because the neighbors of $y$ induce a complete subgraph). By symmetry, we also have ${\rm b}(u) = {\rm b}(u')$. Now \({\rm b}(x) = (k+1)^2 = k^2+2k+1\) and \({\rm b}(u) = k \cdot (k+2) + (k-1) \cdot 1 = k^2 + 3k - 1\), while  ${\rm b}(u) \geq {\rm b}(x)$ if and only if \( k^2 + 3k - 1 \geq k^2+2k+1\), which holds for \( k \geq 2\). Hence, the obtained graph has delocalized betweenness center, $n = 2k+3$ vertices and \(2 {k \choose 2} + 4 =  \frac{n-3}{2} \cdot \left(\frac{n-3}{2} - 1\right) + 4 = \frac{n^2}{4}  -2n + \frac{31}{4}\) edges.
A similar construction yielding a graph of even order joins the vertices $u,u'$ by a new path  $uxx'u'$ of length 3; in the graph so obtained, ${\rm b}(y)=0$ for every vertex $y \neq u,x,x',u'$, hence, by symmetry, it is enough to compare the values ${\rm b}(u),{\rm b}(x)$. We have \({\rm b}(x) = (k+1) \cdot (k+2) = k^2+3k+2\) and \({\rm b}(u) = k \cdot (k+3) + (k-1) \cdot 1 = k^2 + 4k - 1\), while  ${\rm b}(u) \geq {\rm b}(x)$ if and only if \( k^2 + 4k - 1 \geq k^2+3k+2\), that is, for \( k \geq 3\). The obtained graph has $n = 2k + 4$ vertices and \(2{k \choose 2} + 5 = \frac{n-4}{2} \cdot \left (\frac{n-4}{2} - 1\right ) + 5 = \frac{n^2}{4} - \frac{5n}{2} + 11\) edges.

\medskip
Example 2 shows that even a graph with two blocks may have a delocalized betweenness center. On the other hand, we prove a positive result in the case that blocks are small:

\begin{theorem} Let $G$ be a graph with a unique cut-vertex and $k$ blocks $B_1,\dots, B_k$, where each of $B_1,\dots, B_{k-1}$ has at most 9 vertices while $B_k$ has at least 9 vertices. Then the betweenness center of $G$ lies in a single block.
\end{theorem}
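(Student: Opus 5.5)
The plan is to show that no vertex of a small block can be in the betweenness center unless it is the cut-vertex, so that the center lies inside $B_k$. Let $a$ be the unique cut-vertex, so every block $B_i$ contains $a$. Put $n_i=|V(B_i)|-1$, so $n_i\le 8$ for $i<k$ and $n_k\ge 8$. Since $a$ lies in every block, the center is delocalized only if it contains vertices different from $a$ from two distinct blocks. It therefore suffices to prove ${\rm b}(x)<{\rm b}(a)$ for every $x\in V(B_i)\setminus\{a\}$ with $i<k$.

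First I decompose the betweenness values along the block structure. For $x\in V(B_i)\setminus\{a\}$, a pair of vertices not both in $B_i$ contributes to ${\rm b}(x)$ only if it has the form $u\in V(B_i)$, $w\notin V(B_i)$. Every shortest $u$-$w$ path passes through $a$, so such a pair contributes $\sigma_{u,a}(x)/\sigma_{u,a}$. Hence
$${\rm b}(x)={\rm b}_{B_i}(x)+m\,\delta_i(x),\qquad m=n-n_i-1,\quad \delta_i(x)=\sum_{u\in V(B_i)}\frac{\sigma_{u,a}(x)}{\sigma_{u,a}} .$$
Similarly,
$${\rm b}(a)=\sum_j {\rm b}_{B_j}(a)+\sum_{j<l}n_jn_l\ \ge\ {\rm b}_{B_i}(a)+m\,n_i .$$
Here $m\ge n_k\ge 8$.

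Next I bound $\delta_i(x)$. Let $t$ be the number of neighbours of $a$ in $B_i$; since $B_i$ is 2-connected, $t\ge2$. Only vertices $u\ne x$ at distance at least $2$ from $a$ contribute to $\delta_i(x)$, each at most $1$. Thus $\delta_i(x)\le n_i-t$, and it suffices to prove
$${\rm b}_{B_i}(x)<m\,t+{\rm b}_{B_i}(a).$$
By \cite{KlisaraHurajovaMadarasSkrekovski} (Proposition 3.2), ${\rm b}_{B_i}(x)\le\frac{(|V(B_i)|-3)^2}{2}\le\frac{49}{2}$. Since $m\ge8$, the inequality holds immediately whenever $t\ge4$, because then $mt\ge32$.

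The main obstacle is the case $t\in\{2,3\}$. Here I would refine both sides instead of using the crude bounds.
\begin{itemize}
\item Equality $\delta_i(x)=n_i-t$ forces every vertex at distance at least $2$ from $a$ to have all its shortest paths to $a$ through $x$. This pins down a rigid structure, with $x$ essentially separating $a$'s neighbourhood from the rest.
\item The value $\frac{49}{2}$ is attained only by the fan $P_8\vee K_1$ with $x$ its hub. In that case $a$ would be a path vertex adjacent to the hub, which makes $\delta_i(x)$ tiny.
\item More generally, I would trade off a deficit $n_i-t-\delta_i(x)$, which gains $m\ge8$ per unit, against ${\rm b}_{B_i}(x)$.
\item I would also use ${\rm b}_{B_i}(a)$: when $t$ is small, nonadjacent neighbours of $a$ give ${\rm b}_{B_i}(a)>0$.
\end{itemize}
The goal is to show ${\rm b}_{B_i}(x)-{\rm b}_{B_i}(a)<mt+m(n_i-t-\delta_i(x))$ in all configurations. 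I would first try to prove ${\rm b}_{B_i}(x)\le 8t+8(n_i-t-\delta_i(x))$ for 2-connected graphs on at most $9$ vertices. If a clean argument does not materialize, this finite statement can be settled by an exhaustive computer check over all 2-connected graphs of order at most $9$ and all choices of $a,x$. That check is feasible, in the spirit of the search mentioned in Example 1, and the borderline $m=8$ is the worst case since the inequality only improves as $m$ grows.

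Example 2 shows that the thresholds $9$ here are tight: there two blocks of order $10$ give equality-type behaviour with $m=9$. This is why the constants must be handled carefully in this last step.
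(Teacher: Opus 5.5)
Your reduction is sound, and it is essentially the paper's framework: the decomposition ${\rm b}(x)={\rm b}_{B_i}(x)+m\,\delta_i(x)$, the bound ${\rm b}(a)\ge{\rm b}_{B_i}(a)+mn_i$ with $m\ge 8$, and the observation that neighbours of $a$ contribute nothing to $\delta_i(x)$. The gap is that the case you call the main obstacle, $t\in\{2,3\}$, is never actually proved. You offer only heuristics and an exhaustive search that is not carried out. Its target inequality is also non-strict, so it would only give ${\rm b}(x)\le{\rm b}(a)$, which does not exclude $x$ from the center. Part of the difficulty is an arithmetic slip. With $|V(B_i)|\le 9$, Proposition 3.2 gives ${\rm b}_{B_i}(x)\le\frac{(9-3)^2}{2}=18$, not $\frac{49}{2}$. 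The value $\frac{49}{2}$ belongs to the $10$-vertex fan $P_9\vee K_1$ of Example 2; the hub of $P_8\vee K_1$ has betweenness $18$. With the correct bound, $t\ge 3$ is immediate, since $3m\ge 24>18$.

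The missing idea is needed for $t=2$, where $2m\ge 16$ does not beat $18$; you need an extra deficit of $\frac12$.
\begin{itemize}
\item If $x$ is not adjacent to $a$, then $x$ is itself one of the $n_i-t$ non-neighbours and contributes $0$. So $\delta_i(x)\le n_i-3$, and $3m>18$ finishes.
\item If $x$ is one of the two neighbours of $a$, let $w$ be the other one. $B_i$ is not complete (otherwise ${\rm b}(x)=0$), so it has at least $4$ vertices. Since $B_i-x$ is connected and $w$ is the only neighbour of $a$ in it, $w$ has a neighbour $z\ne a,x$, and $z$ is not adjacent to $a$. Then $d(z,a)=2$ and $zwa$ is a shortest $z$-$a$ path avoiding $x$, so $z$ contributes at most $\frac12$. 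Hence $\delta_i(x)\le n_i-\frac52$ and ${\rm b}(x)\le 18+mn_i-\frac52m<mn_i\le{\rm b}(a)$, because $\frac52 m\ge 20$.
\end{itemize}
This is exactly the paper's argument, done there uniformly without splitting on $t$. Take any $u\ne x$ non-adjacent to $a$ and a shortest $u$-$a$ path in $B_i-x$. Let $w$ be its vertex adjacent to $a$ and $z$ the vertex preceding $w$. Then $x$ and $w$ contribute $0$ and $z$ at most $\frac12$, giving $\delta_i(x)\le (n_i-3)+\frac12$. No analysis of extremal fans, deficit trade-offs or computer search is needed.
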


\noindent \textbf{Proof:} Let $a$ be the unique cut-vertex of $G$ (observe that ${\rm b}(a)>0$) and let $i\in\{1,\dots,k-1\}$ be fixed. Put \(b=|V(B_i)\setminus\{a\}|\) and \(p=|\bigcup\limits_{{j=1}\atop {j\neq i}}^k V(B_j)\setminus\{a\}|\) (note that $b\leq8\leq p$). Assume that there exists $x\in V(B_i)\setminus\{a\}$ such that ${\rm b}(x)=\max\limits_{v\in V(G)}{\rm b}(v)$; we may also assume that $B_i$ is not complete (since then ${\rm b}(x)=0$).

It is easy to see that ${\rm b}(a)\geq bp$, because every path between a vertex from $B_i$ and a vertex from another block passes through $a$. Now we estimate ${\rm b}(x)$. In \cite{KlisaraHurajovaMadarasSkrekovski}, it is proved that the maximum betweenness in an $n$-vertex 2-connected graph is $\frac{(n-3)^2}{2}$. This implies that the contribution to ${\rm b}(x)$ from pairs of vertices which are both from $B_i$ is at most $\frac{(b-2)^2}{2}$. Furthermore, no pair of vertices from the other blocks contributes to ${\rm b}(x)$.

Finally, consider pairs $\{u,v\}$ where $u\in V(B_i)$ and $v$ belongs to another block. If every vertex of $B_i\setminus\{a,x\}$ is adjacent to $a$, then none of these pairs contributes to ${\rm b}(x)$, and ${\rm b}(x)<bp\leq{\rm b}(a)$. Otherwise, let $u\in V(B_i)\setminus\{a,x\}$ be a vertex not adjacent to $a$. Since $B_i$ is 2-connected, $B_i-x$ is connected. Consider a shortest $u$-$a$ path in $B_i-x$, and let $w$ be the neighbor of $a$ on this path and $z$ its preceding vertex. Then $w\neq x$ and $z$ is not adjacent to $a$. Hence neither $w$ nor $x$ can occur as the endpoint in a pair contributing to ${\rm b}(x)$, while for the vertex $z$ the corresponding contribution is at most $\frac12$, since $zwa$ is a shortest $z$-$a$ path avoiding $x$. Therefore, the total contribution of such pairs to ${\rm b}(x)$ is at most $(b-3)p+\frac12p$. From this, we obtain
\[
{\rm b}(x)\leq \frac{(b-2)^2}{2}+(b-3)p+\frac12p=\frac{(b-2)^2}{2}+bp-\frac52p.
\]

The fact that $b\leq8$ gives $(b-2)^2\leq36$; moreover, $p\geq8$, so $\frac52p\geq20$. Thus
\[
\frac{(b-2)^2}{2}\leq18<20\leq\frac52p,
\]
which implies ${\rm b}(x)<bp\leq{\rm b}(a)$, a contradiction to the maximality of the betweenness of $x$. So all vertices with the maximum betweenness are contained in the block $B_k$.
\hfill $\square$

\medskip
Example 2 also shows that the bound 9 on the size of blocks cannot be increased.

\section{Realizations of graphs as betweenness centers and  peripheries}

Here, we show the core results on the nature of betweenness centers and peripheries with respect to their appearance in general graphs:

\begin{theorem}
Let $G$ be a graph. Then there exists a graph $F$ such that the subgraph induced by the betweenness center of $F$ is isomorphic to $G$.
\end{theorem}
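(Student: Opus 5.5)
The plan is to build $F$ from $G$ in three layers: a large hub that makes distances inside $G$ trivial, a large block of pendant vertices at every vertex of $G$ that pushes all of $V(G)$ far above everything else, and small per-vertex gadgets that remove the remaining differences exactly. Let $V(G)=\{v_1,\dots,v_n\}$, and take parameters $m\ll s$, say $s=m^3$ with $m$ large.

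\textbf{The construction.} We add an independent set $S$ of $s$ new vertices, each joined to every vertex of $G$. We also attach $m$ new pendant vertices to each $v_i$. Put $N=|V(F)|$. The point of $S$ is that any two nonadjacent vertices of $G$ are at distance $2$, and at least $s$ of their shortest paths avoid $G$ in their interior.

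\textbf{Leading-order computation.} For $v\in V(G)$ the dominant contributions to ${\rm b}(v)$ come from pairs containing a pendant of $v$. These give $\binom{m}{2}+m(N-1-m)$, which is the same for every vertex of $G$. Next come pairs of vertices of $S$: each such pair has exactly $n$ shortest paths, one through each vertex of $G$, so it contributes $1/n$ to every $v$, again uniformly. The only non-uniform contributions come from pairs $\{x,y\}$ whose shortest path is $x'-u-v-w-y'$ type or $u-v-w$ with $u,w\in N_G(v)$ nonadjacent, together with their pendants. Such a pair contributes $\sigma(v)/(c_{uw}+s)$ at most, where $c_{uw}$ is the number of common neighbours of $u$ and $w$ in $G$. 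In total this is $\varepsilon_v=O(n^2m^2/s)=O(n^2/m)$. So for large $m$ every vertex of $G$ has betweenness $\Theta(m^2 n)$ up to an error below $1$.

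Every vertex outside $G$ is far behind. A pendant has betweenness $0$. A vertex of $S$ only lies on paths between nonadjacent vertices of $G$ or their pendants, and it shares such pairs with $s-1$ other hub vertices, so its betweenness is $O(n^2m^2/s)$. Hence the betweenness center is contained in $V(G)$, and it remains to make the values on $V(G)$ exactly equal.

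\textbf{Exact equalization (main obstacle).} The quantities $\varepsilon_v$ are explicit rationals with denominators of the form $c_{uw}+s$, and in general they differ between vertices. I would compensate vertex by vertex. To each $v_i$ attach a gadget $H_i$ whose vertices interact with the rest of $F$ only through $v_i$, so that its contribution to ${\rm b}(v_i)$ is computable and it does not disturb the other vertices of $G$ beyond uniform terms. The first attempt is to let $H_i$ consist of $t_i$ further pendants plus a few vertices attached through small 2-connected pieces. Pendants change ${\rm b}(v_i)$ in integer steps of about $N$. Pieces such as a vertex adjacent to both $v_i$ and some vertex of $S$, with multiplicity tuned, give fractional shortest-path ratios with denominators $c+s$ and so can adjust the fractional part.

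The target is a common value $D$ larger than every ${\rm b}(v_i)$ before compensation, so that all corrections are nonnegative. One must then check that $D-{\rm b}(v_i)$ is realizable exactly by such a gadget. Adding a gadget at $v_i$ changes $N$, so the vertex counts must be fixed first: the easiest way is to give every $v_i$ gadgets of the same total size and vary only their internal shape. Finally one verifies that the new vertices still have small betweenness. If exact arithmetic with a general gadget turns out to be too delicate, a fallback is to choose $S$ and the gadgets so that every nonadjacent pair of $G$ has the same value of $c_{uw}+s$. This can be arranged by joining extra private hub vertices to each nonadjacent pair to pad $c_{uw}$ up to a common constant. Then every $\varepsilon_v$ is an integer combination of one fixed fraction, and equalizing by pendants and padding becomes a purely integer problem.
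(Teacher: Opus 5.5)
\textbf{There is a genuine gap: the exact equalization step, which carries the whole theorem, is not carried out.}

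Your hub-and-pendant layers are computed essentially correctly, and they do put the center inside $V(G)$. One harmless slip: the largest uniform term is $\binom{s}{2}/n$, from pairs in $S$, not the pendant term, so ${\rm b}(v)$ is not $\Theta(m^2n)$. What these layers leave is ${\rm b}(v)=U+\varepsilon_v$ with $\varepsilon_v=(m+1)^2\sum 1/(s+c_{uw})$, the sum taken over nonadjacent pairs $\{u,w\}$ of $G$ with $v\in N_G(u)\cap N_G(w)$. This genuinely depends on $v$. For $G=P_3$ it is positive at the middle vertex and $0$ at the ends, so your $F$ has a one-vertex center.

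You call the equalization the main obstacle but specify no gadget and verify no identity, and both sketches fail as stated:
\begin{itemize}
\item A vertex $g$ joined to $v_i$ and to some $y\in S$ is simplicial, since $v_iy$ is an edge. The ratios it creates with the original vertices therefore have denominators $1$ or $2$, not $s+c$. It also raises ${\rm b}(v_i)$ by at least $s-1$ (the pairs $\{g,y'\}$, $y'\in S\setminus\{y\}$). That is far larger than the sub-unit differences $\varepsilon_v-\varepsilon_{v'}$ you are trying to cancel.
\item A private hub $z_{uw}$ adjacent only to $u,w$ adds $\frac12$ to ${\rm b}(u)$ and to ${\rm b}(w)$ for each of the $s$ pairs $\{z_{uw},y\}$, $y\in S$. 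The number of hubs per pair depends on $c_{uw}$, so padding creates new non-uniform terms of order $s$, plus further fractional terms from pairs of hubs.
\item Adding pendants or gadget vertices at $v_i$ also changes ${\rm b}(z)$ at every common neighbour $z$ of $v_i$ and a non-neighbour of $v_i$.
\end{itemize}
So the corrections are coupled across vertices, and nothing reduces the problem to a solvable integer system.

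The missing idea is to obtain equality from symmetry rather than from arithmetic. The paper embeds $G$ as an induced subgraph, on a vertex set $A$, of a Cayley graph $H$ (Sabidussi). It attaches a triangle $vp_vq_v$ at every $v\in V(H)$; the resulting graph keeps $V(H)$ in one automorphism orbit, so all of $V(H)$ has a common betweenness $b$. It then deletes $p_aq_a$ for each $a\in A$. The vertices $p_a,q_a$ are never interior to shortest paths between other vertices, so the only change is that the pair $\{p_a,q_a\}$ now contributes exactly $1$ to ${\rm b}(a)$. This gives ${\rm b}=b+1$ on $A$, ${\rm b}=b$ on $V(H)\setminus A$, and ${\rm b}=0$ elsewhere. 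To keep your hub idea, you would have to apply it to such a vertex-transitive host and then break the tie in favour of $A$ with an exactly controlled bonus, which is essentially the paper's argument.
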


\noindent \textbf{Proof:} The statement clearly holds for complete graphs (here, put $F$ = $G$). We start with the well-known fact (see \cite{Sabidussi}) that every (non-complete) graph $G$ is an induced (proper) subgraph of a Cayley graph $H$. Let $A \subseteq V(H)$ be the vertex set of a fixed induced copy of $G$ in $H$. Let $F_0$ be the graph obtained from $H$ in such a way that, for every vertex $v \in V(H)$ we add two new vertices $p_v,q_v$ and three new edges $vp_v,vq_v, p_vq_v$. Finally, let $F$ be the graph obtained from $F_0$ by deleting edges $p_vq_v$ for every $v \in A$; note that $A$ induces, in $F$, a subgraph isomorphic to $G$.

Note that for every $v \in V(H) \subset V(F)$, ${\rm b}_{F}(p_v) = {\rm b}_{F}(q_v) = 0$ as the neighborhood of these vertices induces $K_1$ or $K_2$ in $F$. Further, the vertex transitivity of $H$ implies that, in $F_0$, all vertices from $V(H)$ belong to the same orbit of the automorphism group of $F_0$; consequently, in $F_0$, they have the same value $b$ of betweenness centrality. 

Now, we discuss the change of betweenness in $F$ compared to the values of betweenness in $F_0$. If $a \in A \subset V(F)$, then the pair $p_a,q_a$ contributes 1 to the value of ${\rm b}_F(a)$ (in $F$, there is a unique path $p_a a q_a$ between $p_a$ and $q_a$). The contribution of every other pair $x,y$ to ${\rm b}_F(a)$ is the same as its contribution to ${\rm b}_{F_0}(a)$ since the sets of the shortest $x-y$-paths in $F_0$ and $F$ (as well as the sets of those of them that pass through $a$) are the same (note that, for $a \in A$, the vertices $p_a,q_a$ of $F_0$ never belong to a shortest $x-y$ path of $F$ unless $\{x,y\} = \{p_a,q_a\}$). An analogous argument applies also to a vertex $u \in V(H) \setminus A$ of $F$ (but here, the pair $p_u,q_u$ does not contribute to ${\rm b}_F(u)$). Therefore, we conclude that ${\rm b}_{F}(a) = b+1$ for $a \in A$, ${\rm b}_{F}(u) = b$ for $u \in V(H) \setminus A$, and ${\rm b}_F(v) = 0$ for every other vertex $v$ of $F$. Thus, the set $A$ is the betweenness center of $F$.
\hfill\qed

\begin{theorem}
Let $G$ be a graph. Then there exists a graph $F$ such that the subgraph induced by the betweenness periphery of $F$ is isomorphic to $G$.
\end{theorem}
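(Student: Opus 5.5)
The plan mirrors the proof of the previous theorem. We start from a vertex-transitive host, so that all relevant vertices begin with a common betweenness value, and then perturb it locally so that exactly the copy of $G$ ends up with the minimum value. If $G$ is complete we take $F=G$, since every vertex of $K_n$ has betweenness $0$. Otherwise, by \cite{Sabidussi}, $G$ is an induced proper subgraph of a Cayley graph $H$; let $A\subseteq V(H)$ span a fixed induced copy of $G$.

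The first structural observation guides the construction. A vertex has betweenness $0$ exactly when it is simplicial, because the pair formed by two nonadjacent neighbours of $v$ always sends a geodesic through $v$. Hence, if $F$ has any simplicial vertex, the periphery of $F$ is exactly its set of simplicial vertices. Since $G$ need not have clique neighbourhoods, the pendant triangles used for the center cannot be reused here, because $p_v,q_v$ would become peripheral. So $F$ must be free of simplicial vertices, and the minimum betweenness must be positive.

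A connected non-complete vertex-transitive graph has no simplicial vertex: if one vertex were simplicial, all would be, forcing $H$ to be a disjoint union of cliques. Therefore $H$ itself has no simplicial vertex, and all its vertices have the same betweenness $b>0$.

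The plan is to \emph{raise} the betweenness of every $u\in V(H)\setminus A$ while keeping every newly added vertex strictly above the value attained on $A$. Concretely, for each $u\notin A$ I would attach a gadget $D_u$ that contains no simplicial vertex and is joined to $u$ through a cut-vertex or cut-edge. For instance, take a cycle $C_4$ through $u$, or a copy of $K_{2,m}$ whose two $m$-side-independent vertices are $u$ and a new vertex. Then every pair consisting of a vertex of $D_u$ and a vertex of $H-u$ routes through $u$, so ${\rm b}_F(u)$ increases by about $|D_u|\cdot(|V(F)|-|D_u|)$.

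To keep the values on $A$ equal to each other, the same kind of gadget should also be attached to each $a\in A$, but a smaller one. For example, attach $D_u$ with parameter $m$ for $u\notin A$ and $D_a$ with parameter $m'<m$ for $a\in A$. Vertex-transitivity of $H$ together with the symmetric treatment of $A$ and of $V(H)\setminus A$ keeps the values constant on each class. This requires checking that the gadget does not create new shortest paths inside $H$; attaching through a single vertex guarantees this, as in the previous proof.

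The key step, and the one I expect to be the main obstacle, is the comparison showing that every gadget vertex has betweenness strictly larger than ${\rm b}_F(a)$ for $a\in A$. The difficulty is that a vertex $a\in A$ also collects the cross pairs from its own gadget. So the gadgets must be chosen with their internal vertices on many shortest paths: in $K_{2,m}$, the far degree-$m$ vertex carries all $\binom{m}{2}$ internal pairs. Their cross-contribution to $a$ should stay small, which calls for few gadget vertices at $a$ compared with the internal load. My first attempt would be to attach to each $a\in A$ no gadget at all. I would then compute directly whether ${\rm b}_F(a)=b+(\text{contributions of pairs between other gadgets, routed through } a)$ stays below both ${\rm b}_F(u)$ for $u\notin A$ and the internal betweenness $\ge\binom{m}{2}$ of the gadget hubs, by taking $m$ large relative to $|V(H)|$.

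If the cross-pair routing through $a$ grows too fast with $m$, I would instead bound it using the fact that pairs in two different gadgets $D_u,D_{u'}$ contribute to ${\rm b}(a)$ only through the $u$--$u'$ geodesics of $H$. That contribution is at most $m^2$ times a constant depending on $H$, which the quadratic internal load $\binom{m}{2}$ can dominate once the gadgets are chosen suitably denser, for example $K_{3,m}$.
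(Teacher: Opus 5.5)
There is a genuine gap: the construction breaks exactly where you claim vertex-transitivity keeps ${\rm b}_F$ constant on $A$. Your preliminary observations are fine (betweenness $0$ exactly at simplicial vertices, and a connected non-complete vertex-transitive $H$ has none).

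Suppose every $u\in B=V(H)\setminus A$ carries a gadget with $s$ new vertices hanging from $u$. Then all traffic between gadget vertices and the rest of $F$ runs along geodesics of $H$. For $a\in A$ this gives ${\rm b}_F(a)={\rm b}_H(a)+s\,\mu(a)+s^2\lambda(a)$, where $\lambda(a)=\sum_{\{u,u'\}\subseteq B}\sigma_{u,u'}(a)/\sigma_{u,u'}$ and $\mu(a)=\sum_{u\in B}\sum_{y\in V(H)\setminus\{u\}}\sigma_{u,y}(a)/\sigma_{u,y}$. These weights depend on where $a$ sits relative to $B$, and an automorphism of $H$ sending $a$ to $a'$ need not map $B$ onto $B$. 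So transitivity tells you nothing about them.

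Concretely, take $H=Q_3$ and $A=\{000,001,011\}$, an induced $P_3$. Then $\lambda(000)=\frac56$ and $\lambda(001)=\frac13$. In fact ${\rm b}_F(000)-{\rm b}_F(001)=s+\frac{s^2}{2}>0$ for every $s$, so $000$ is never peripheral. Adding smaller gadgets on $A$ does not remove this dependence in general, because $A$--$A$, $A$--$B$ and $B$--$B$ pairs are then weighted differently.

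Independently, your gadgets contain vertices of small betweenness. The vertex of the $C_4$ opposite $u$ has betweenness exactly $\frac12$, which is below ${\rm b}_H(a)=\frac52$ in the example. Each of the $m$ degree-$t$ vertices of $K_{t,m}$ has betweenness $\frac1m\bigl(\binom t2+(t-1)(|V(F)|-t-m)\bigr)$, which stays bounded (it tends to $(t-1)(|B|-1)$). Meanwhile ${\rm b}_F(a)$ grows like $m^2\lambda(a)$ whenever $\lambda(a)>0$. So these gadget vertices, rather than $A$, would form the periphery; the quadratic-load estimate you sketch protects only the hubs.

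The missing idea is to add the heavy part so that no geodesic with an endpoint outside $V(H)$ has an interior vertex in $A$. Then ${\rm b}_F(a)$ is one common value, independent of the size of the added part. The paper achieves this as follows:
\begin{itemize}
\item It adds a vertex $r_0$ adjacent to all of $V(H)$. Then $J=H+r_0$ is still transitive on $V(H)$, with common value $c\le\binom{n-1}{2}$.
\item It places $r_0$ in the side $R$ of a $K_{m,m}$ whose other side is $L$.
\item It joins each $v_i\in B$ to $\ell_{2i-1},\ell_{2i}\in L$.
\end{itemize}
Any route from a new vertex through a vertex of $A$ is strictly longer than one through $r_0$ or $B$. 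Hence ${\rm b}_F(a)=c$ exactly for all $a\in A$. The pair $\ell_{2i-1},\ell_{2i}$ lifts each $v_i$ to at least $c+\frac1{m+1}$. Every vertex of $K_{m,m}$ collects at least $\binom m2/(m+1)>\binom{n-1}{2}\ge c$ from pairs on the opposite side.

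The essential difference from your plan is where the heavy structure is anchored: at a new vertex dominating $H$, not at cut-vertices inside $H$. That is why its traffic never crosses $A$.
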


\noindent \textbf{Proof:} Without loss of generality, we can assume that $G$ is not complete. Like in the previous proof, we first take a Cayley graph $H$ such that $G$ is its induced subgraph; let $A \subseteq V(H)$ be the vertex set of a fixed induced copy of $G$ in $H$. Set $B = V(H) \setminus A, b = |B|, n = |V(H)|$. Let $m$ be a positive integer such that $m \geq 2b$ and $\dfrac{{m \choose 2}}{m+1} > {{n-1} \choose 2}$; as the function $\dfrac{{x \choose 2}}{x+1} = \dfrac{x(x-1)}{2(x+1)}$ is unbounded as $x$ increases, such an $m$ exists.

Now, let $K_{m,m}$ be a complete bipartite graph with parts $L = \{\ell_1,\dots, \ell_m\},$\linebreak $ R = \{r_0,r_1,\dots, r_{m-1}\}$. Add new edges $r_0v$ for every $v \in V(H)$; further, with $B = \{v_1, \dots, v_b\}$, add new edges $v_i \ell_{2i-1}, v_i \ell_{2i}$ for each $i \in \{1,\dots,b\}$. Denote the obtained graph by $F$, and its subgraph induced by $V(H) \cup \{r_0\}$ by $J$. By transitivity of $H$, all vertices of $V(H)$ in $J$ belong to the same orbit of its automorphism group, hence, they have the same betweenness $c$.

We prove first that, for every $a \in A$, ${\rm b}_F(a) = c$. Observe that, for any pair $x,y \in V(H)$, the sets of all shortest $x-y$-paths in $J$ and in $F$ are the same. Now we show that any non-trivial (that is, of length at least 2)  shortest $x-y$-path $P$ such that $y \not \in V(H)$ cannot pass through $a$.
We consider several possibilities:
\begin{itemize}
\item If $x \in V(H),y \in L$, then $d(x,y)\leq 2$, but -- as $a$ has no neighbor in $L$ -- no shortest $x-y$-path passes through $a$.
\item If $x \in V(H),y \in R \setminus \{r_0\}$, then any $x-y$-path passing through $a$ has length at least 4 (note that $a$ has no neighbors in $L$ while $y$ has neighbors only in $L$), but there is a shorter path $x r_0 \ell_1 y$. 
\item If $x,y \in L \cup R$, then the distance of $x,y$ in $F$ is at most 2 while every $x-y$-path passing through $a$ has length at least 3.
\item If $y = r_0$, then $x$ is either adjacent to $y$, or their distance is 2, but every $x-y$-path of length 2 then avoids $a$.
\end{itemize}
This implies that only pairs $x,y \in V(H)$ can give a nonzero contribution to ${\rm b}_F(a)$, and the sets of shortest $x-y$-paths in $F$ and $J$ are the same. Consequently, ${\rm b}_F(a) = {\rm b}_J(a) = c \leq {{n-1} \choose 2}$.

\medskip
Next, in $F$, consider a vertex $v \not \in A$. 
\begin{itemize}
\item If $v = v_i \in B$, then the contribution of the pair $\ell_{2i-1},\ell_{2i}$ to the value of ${\rm b}_F(v)$ is $\frac{1}{m+1}$. For a pair $x,y$ with $x,y \in V(H)$, the contribution to ${\rm b}_F(v)$ is the same as to the value of ${\rm b}_J(v)$; therefore, ${\rm b}_F(v) \geq c + \frac{1}{m+1}>c$. 
\item Further, let $v \in L$. We estimate the contribution to ${\rm b}_F(v)$ only for the pairs $x,y$ where $x,y \in R$. In $F$, there are $m$ shortest $x-y$-paths, hence, the pair $x,y$ contributes $\frac{1}{m}$ to the value of ${\rm b}_F(v)$. Since there are ${m \choose 2}$ such pairs, we obtain that ${\rm b}_F(v) \geq {m \choose 2}\cdot \frac{1}{m} > {m \choose 2}\cdot \frac{1}{m+1} > {{n-1}\choose 2} \geq  c$.
\item Finally, consider $v \in R$. We estimate the contribution to ${\rm b}_F(v)$ only for the pairs $x,y$ where $x,y \in L$. Note that $x,y$ have at most one common neighbor in $B$; we obtain that there are $m$ or $m+1$ shortest $x-y$-paths in $F$. Thus, the pair $x,y$ always contributes at least $\frac{1}{m+1}$ to ${\rm b}_F(v)$. Since there are $m \choose 2$ such pairs in total, we obtain that ${\rm b}_F(v) \geq {m \choose 2}\cdot \frac{1}{m+1} > {{n-1}\choose 2} \geq c$.
\end{itemize}

Thus, in $F$, the vertices from $A$ have smaller betweenness than the vertices outside $A$, and they induce a subgraph isomorphic to $G$. This proves the claim.
\hfill\qed

\section{Betweenness centers of trees}

The properties of betweenness centers of graphs from particular graph families are still not explored in detail (in contrast with classic eccentricity centers, see, for example, \cite{Jordan,Proskurowski1,Proskurowski2,Chang,YehChang}). We contribute to this area with the result on trees. Note that it was originally proved in \cite{HararyOstrand} in other terms for the so-called cutting center (in trees, it coincides with the betweenness center); we provide a different proof.

\begin{theorem} For any tree, its betweenness center is contained in a path. 
\end{theorem}

\noindent \textbf{Proof:}  The statement clearly holds for $K_1$ and $K_2$. Assume that there exists a tree $T$ containing three vertices $x,y,z$ of maximum betweenness which do not lie on a common path in $T$. Then there exists a unique vertex $a$ which lies on the three paths between $x,y$, $y,z$ and $x,z$. The graph $T-a$ consists of three trees $T_x,T_y,T_z$ that contain $x$, $y$ and $z$, and, possibly, other trees which do not contain these vertices. Set $k_1 = |V(T_x)|,k_2 = |V(T_y)|,k_3=|V(T)\setminus (V(T_x)\cup V(T_y))|-1$; then $k_1\geq 2, k_2 \geq 2, k_3 \geq 2$ (if, say, $k_1 = 1$, then $x$ is a pendant vertex and ${\rm b}(x) = 0$, a contradiction).   
Further, ${\rm b}(a) \geq k_1\cdot k_2 + k_2\cdot k_3 + k_1\cdot k_3$ and, also, ${\rm b}(x)\leq (k_1 -1)\cdot(k_2 + k_3+1) + \binom{k_1-1}{2} < k_1 k_2 + k_1 k_3 + k_1 - 1 + \frac{(k_1 -1)(k_1 - 2)}{2}$, ${\rm b}(y)\leq (k_2 -1)\cdot(k_1 + k_3+1) + \binom{k_2-1}{2} < k_1 k_2 + k_2 k_3 + k_2 - 1 + \frac{(k_2 -1)(k_2 - 2)}{2}$ and ${\rm b}(z)\leq (k_3 -1)\cdot(k_1 + k_2 + 1) + \binom{k_3-1}{2}<k_2 k_3 + k_1 k_3 + k_3 - 1 + \frac{(k_3 -1)(k_3 - 2)}{2}$. As $x,y,z$ have maximum betweenness, ${\rm b}(a)\leq {\rm b}(x) ={\rm b}(y)={\rm b}(z)$. It follows that
\[
\begin{aligned}
k_1 k_2 + k_2 k_3 + k_1 k_3 &\leq k_1 k_2 + k_1 k_3 + k_1 - 1 + \frac{(k_1 -1)(k_1 - 2)}{2} \\
k_1 k_2 + k_2 k_3 + k_1 k_3 &\leq k_1 k_2 + k_2 k_3 + k_2 - 1 + \frac{(k_2 -1)(k_2 - 2)}{2}
\\
k_1 k_2 + k_2 k_3 + k_1 k_3 &\leq k_2 k_3 + k_1 k_3 + k_3 - 1 + \frac{(k_3 -1)(k_3 - 2)}{2}
\end{aligned}
\]
or, equivalently,  
\[
\begin{aligned}
2k_2 k_3 &\leq k_1(k_1-1)
\\
2k_1 k_3 &\leq k_2(k_2-1)
\\
2k_1 k_2 &\leq k_3(k_3-1).
\end{aligned}
\]
Since $k_i k_j < 2k_ik_j$ and $k_i(k_i-1) < k_i^2$, we obtain
\[
\begin{aligned}
k_2 k_3 &< k_1^2
\\
k_1 k_3 &< k_2^2
\\
k_1 k_2 &< k_3^2.
\end{aligned}
\]
Multiplying the corresponding sides of these inequalities yields $k_1^2 k_2^2 k_3^2 < k_1^2 k_2^2 k_3^2 $, a contradiction. 
\hfill $\square$ 

\medskip
Note that an analogous result does not hold for unicyclic graphs: taking three stars $K_{1,5}$ and joining the center of each with a vertex of $K_3$, we obtain a graph with a unique cycle, whose betweenness center consists precisely of the centers of the three attached stars.

\medskip
Up to order 20, there is no tree with a betweenness center of order greater than 4. Table 1 shows the numbers of $n$-vertex trees ($n = 1,\dots, 20$) having a $k$-vertex betweenness center, for $k = 1,\dots, 4$. Although the results of \cite{HararyOstrand} yield the existence of trees of arbitrarily large betweenness centers (in addition, with arbitrarily prescribed structure within a common path), their construction is highly non-trivial, involving existential approaches from calculus; moreover, it seems that, for fixed tree order, such trees are extremely rare.

\begin{table}[h!]
\[
\begin{array}{|r|r|r|r|r|r|}
\hline
n & k=1 & k=2 & k=3 & k=4 & {\rm total\ count} \\
\hline
\hline
1 & 1 & 0 & 0 & 0 &  1 \\
\hline
2 & 0 & 1 & 0 & 0  & 1 \\
\hline
3 & 1 & 0 & 0 & 0  & 1 \\
\hline
4 & 1 & 1 & 0 & 0 &  2 \\
\hline
5 & 3 & 0 & 0 & 0 &  3 \\
\hline
6 & 4 & 2 & 0 & 0 &  6 \\
\hline
7 & 9 & 1 & 1 & 0 &  11 \\
\hline
8 & 18 & 5 & 0 & 0  & 23 \\
\hline
9 & 45 & 2 & 0 & 0  & 47 \\
\hline
10 & 93 & 10 & 2 & 1  & 106 \\
\hline
11 & 225 & 10 & 0 & 0  & 235 \\
\hline
12 & 512 & 39 & 0 & 0  & 551 \\
\hline
13 & 1244 & 56 & 1 & 0  & 1301 \\
\hline
14 & 2958 & 191 & 9 & 1  & 3159\\
\hline
15 & 7477 & 253 & 11 & 0  & 7741 \\
\hline
16 & 18266 & 997 & 54 & 3  & 19320 \\
\hline
17 & 47524 & 1100 & 5 & 0  & 48629 \\
\hline
18 & 119502 & 4263 & 101 & 1 & 123867 \\
\hline
19 & 309467 & 8336 & 151 & 1 & 317955 \\
\hline
20 & 800104 & 21289 & 1386 & 286 & 823065 \\
\hline
\end{array}
\]
\caption{The census of trees with betweenness centers of prescribed size}
\end{table}

\section{Concluding remarks}

The fact that the trees considered above have relatively small -- sometimes very small -- betweenness centers might suggest that this holds in general. In light of this, we formulate

\begin{problem}
Let $
f(n)=\max\{|C_{\rm b}(T)|:\ T\text{ is a tree of order }n\}
$.
Determine $f(n)$, or at least its asymptotic order of growth. Is $f(n) = o(n)$?
\end{problem}

\begin{conjecture}
Let $T_n$ be chosen uniformly at random from all unlabeled trees of order $n$. Then
$
\Pr(|C_{\rm b}(T_n)|=1)\longrightarrow 1
\quad\text{as }n\to\infty.
$
\end{conjecture}

The examples 2 and 3 show that the betweenness center of a graph need not be connected. This motivates the following 

\begin{problem}
Find sufficient conditions for graphs to have connected betweenness centers.
\end{problem}

\bigskip\noindent
{\bf Acknowledgement. } This research was supported by the Slovak Research and Development Agency under the Contract No. APVV-23-0191.






\bibliographystyle{elsarticle-num-names-alphsort}



\end{document}